\documentclass[letterpaper, 10 pt, conference]{ieeeconf}  
\IEEEoverridecommandlockouts                   
\usepackage{graphicx}       
\usepackage{amsfonts}
\DeclareMathSymbol{\shortminus}{\mathbin}{AMSa}{"39}
\let\labelindent\relax
\usepackage{enumitem}
\usepackage{multirow}
\usepackage{booktabs}
\usepackage{array}
\usepackage{float}
\usepackage{amsmath}
\usepackage{bm}
\usepackage{epsfig}
\usepackage{arydshln}
\usepackage{algpseudocode}
\usepackage{verbatim}
\usepackage{subfigure}
\usepackage{enumerate}
\usepackage{rotating}
\usepackage{color}
\usepackage{flushend} 
\usepackage[ruled,vlined,linesnumbered]{algorithm2e}
\usepackage{cite}
\usepackage{float}
\usepackage{soul}
\usepackage{fancyhdr}
\usepackage{url}
\usepackage{makecell} 
\usepackage{mathtools}
\usepackage{caption}
\usepackage{xcolor}

\usepackage[para,online,flushleft]{threeparttable}

\title{\LARGE \bf
Optimality of delayed reward attainment under uncertainty
}

\author{Chuwei Wang, Alexander Vladimirsky*, Anastasia Bizyaeva* % <-this % stops a space
\thanks{C.W. and A.B. are with the Sibley School of Mechanical and Aerospace Engineering, Cornell University, Ithaca, NY 14853, USA \texttt{\{cw996,anastasiab\}@cornell.edu}}% <-this % stops a space
\thanks{A.V. is with the Department of Mathematics, Cornell University, Ithaca, NY 14853, USA \texttt{vladimirsky@cornell.edu}}
\thanks{* A.V. and A.B. contributed equally to the manuscript.}
\thanks{This work was partially funded by the NSF DMS-2111522, and AFOSR FA9550-22-1-0528.} }%

\newtheorem{theorem}{Theorem}

\newtheorem{assumption}{Assumption}

\begin{document}
\maketitle
\thispagestyle{empty}
\pagestyle{empty}

\begin{abstract}
Decision making under uncertainty requires balancing 
the value of additional information against the cost of delaying the decision until that information becomes available.
This challenge is especially prevalent for embodied decisions such as 
those faced by
animals moving between food sources and robots selecting navigation targets, where movement through space shapes both the quality of information and the cost of acting.
We consider a version of this problem, where the decision is on choosing among rewards of uncertain value, and those rewards can only be attained through changing the decision maker's state sufficiently.
Our focus is on
interactions 
between reward learning and 
state evolution, restated  
as a deterministic optimal control problem with two
terminal alternatives, one with a known reward and the other learned gradually through noisy observations. 
We demonstrate that 
delayed reward attainment is often
optimal in a variety of systems,
especially so when the alternatives are initially difficult to distinguish and the cost of waiting is relatively low.  
For a subset of problems, we also derive sufficient conditions that allow shortening the planning horizon a priori, guaranteeing that further observations would not affect the value function.
Finally, when observation quality depends on the state, we show that it is often optimal for a controller to  evolve the state strategically toward more informative sensing locations before making a decision, in line with the behavior broadly observed in biological and engineered systems.
\end{abstract}
\section{Introduction}

Many biological and engineered systems must make irreversible decisions
under uncertainty. In such settings, an agent must balance the benefit
of making an early decision, thereby limiting additional costs of delaying, against the benefit of remaining uncommitted
while acquiring information that may improve the eventual choice.
This trade-off arises in foraging, navigation, target identification,
and resource allocation, where values of competing alternatives may
be only partially known. 
In this paper, we study 
different ways in which 
delaying a decision can be optimal when 
state evolution, information acquisition, and uncertain terminal rewards are all coupled. 

Normative theories of
decision making 
balance the value of
acting on current information against the expected value of future
information \cite{BOGACZ2007118}. 
Algorithmically, the optimal timing for choosing among immediately available uncertain rewards is also well-studied in
the literature on sequential sampling \cite{Chick12}.  
However, this 
cost/benefit analysis
becomes more subtle when selecting an alternative does not immediately yield its reward, but instead requires subsequent state evolution before the reward can be attained. 
For example, when animals or robots make decisions on alternatives with different physical target locations, 
rewards can only be obtained 
through changing the agent's physical position.
Actions such as movement maneuvers taken while the decision remains unresolved can then affect both the information available for making the decision and the cumulative cost of claiming the reward once it is selected.

Resulting trade-offs are well-known in both biological and engineered systems. Across species, longer decision times have been observed when competing alternatives are difficult to distinguish, including alternatives with similar subjective values in monkeys
\cite{setogawa2019neurons} and perceptual discrimination tasks in bumblebees and mice \cite{dyer2004bumblebees,sanders2012choice}. When decisions involve
movement, fruit flies, desert locusts, zebrafish, and mice can delay spatial choices while integrating information about competing alternatives\cite{pnasSridhar21,gorbonos2024geometrical,BENQUET20263523}. Similarly, in aircraft collision avoidance, uncertainty in 
an approaching aircraft's state can make it optimal to delay the 
advisory until the appropriate maneuver, such as climbing or descending, becomes clearer \cite{Kochenderfer2012NextGenerationAC, katz2025aircraft}.  
Interestingly, in some robotics applications, delayed choice is not just an emergent behavior, but might be deliberately engineered to preserve reachability to multiple candidate targets for as long as possible while maintaining upper bounds on the cost of attaining each of them \cite{Elangoddto22,11563843}.

Several prior papers have studied the emergence of delay as optimal behavior 
due to stochastic perturbations.
Optimal control formulations can lead to
delayed decisions because of the stochasticity in the dynamics of the controller itself \cite{PhysRevLett05,broek2012risk}, the dynamics of
obstacles/adversaries \cite{Kochenderfer2012NextGenerationAC, katz2025aircraft}, or the perceived dynamics of targets \cite{Haith2015HedgingYB}.  In contrast, we show that the delay can 
be optimal
even if the dynamics is fully deterministic but the 
terminal reward is uncertain.
Specifically, we consider two 
alternative decision targets,
with one yielding a known reward and the other an
uncertain reward inferred from noisy observations. 
As shown in Fig.~\ref{fig:intro}, delayed reward attainment emerges 
as optimal behavior
when the alternatives are initially difficult to distinguish 
and waiting is relatively inexpensive. 
This observation holds 
both for linear bounded-control dynamics and  nonlinear opinion dynamics with controlled attention rate,
suggesting that delayed commitment is a general
consequence of cost trade-offs rather than a feature of a particular 
model.

\begin{figure}[htbp]
    \centering
    \includegraphics[width=1\linewidth]{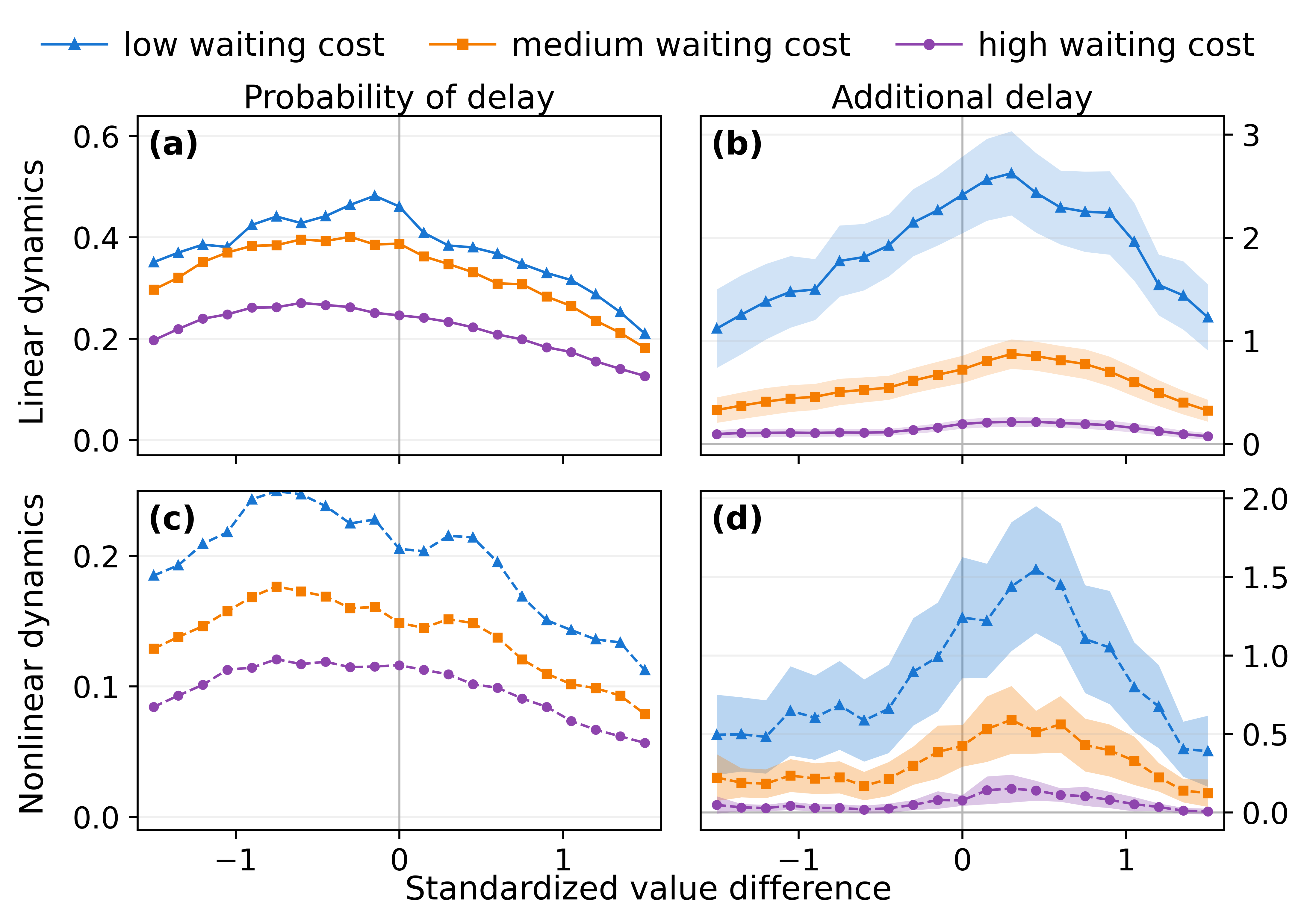}
    \caption{\small Delayed decision making emerges as an optimal strategy when the alternatives
    are difficult to distinguish 
    due to noisy observations
    and waiting is
    relatively
    inexpensive.  This behavior persists across both linear (top row) and nonlinear (bottom row)
    dynamics.
    The probability of delayed reward attainment
    (left) and the additional delay (right) are shown as functions of
    the standardized value difference, \((K_1-K_2)/\sigma\), with curves
    representing different waiting costs. Shaded regions indicate approximate 95\% confidence intervals for the mean additional delay time.
    Definitions, parameters, and numerical
    details are provided in \S~\ref{sec_IV_A}.}
    \label{fig:intro}
\end{figure}

Our optimal control approach based on dynamic programming
is described in \S\ref{s:problem}.
In \S\ref{s:horizon} we focus on a subset of problems, for which we can derive an a priori bound on observation horizon 
such that later observations can be disregarded without changing the optimal value.  
Numerical experiments in \S\ref{s:results} explore the range of parameter values for which delayed reward attainment is optimal. We also examine how {\em state-dependent observation quality} can induce
strategic state evolution that improves information acquisition before termination.  Our findings are summarized and future directions briefly described in \S\ref{s:conclusions}.

\section{Problem Formulation and numerics}
\label{s:problem}

\subsection{Dynamics and Observation Model}

We consider a controlled dynamical system
\begin{equation}
    \dot z=f(z,u;\mu),
    \qquad
    u\in\mathcal U,
    \label{eq:dynamics}
\end{equation}
where  \(z\in\mathcal Z=[z_{\min},z_{\max}]\) is the state,
\(u\) is the control input and  \(\mu\) represents the current estimate of an uncertain terminal reward, defined formally below.
The function \(f\) is
assumed to be continuous and locally Lipschitz in \(z\).

The decision maker can terminate the process by attaining one of two
terminal rewards available at the boundaries of the state domain. At the upper boundary \(z_{\max}\),
termination yields an uncertain reward \(K_1^\star\), whereas at the
lower boundary \(z_{\min}\), termination yields a known reward \(K_2\).
Thus, the corresponding terminal reward is
\begin{equation}
R_{\rm term}
=
\begin{cases}
K_1^\star, & z=z_{\max},\\
K_2, & z=z_{\min}.
\end{cases}
\label{eq:terminal_reward}
\end{equation}
Reaching a boundary makes the corresponding terminal reward available
but does not require termination; the controller may instead continue
inward when doing so is preferable. The process ends when the
controller terminates at either boundary.

Information about \(K_1^\star\) is acquired at a finite sequence of
sampling times
\(
0=t_1<t_2<\cdot\cdot\cdot<t_n.
\) If the process has not terminated by \(t_j\), the observation is
\begin{equation}
o_j
=
K_1^\star+\omega_j,
\qquad
\omega_j
\sim
\mathcal N
\!\left(
0,\sigma^2(z(t_j))
\right),
\label{eq:obs}
\end{equation}
where \(\sigma^2(z)\) denotes the observation-noise variance. The function
\(\sigma(z)\) includes both constant-variance and state-dependent
observation models considered later in the paper.

The belief is initialized using the first observation at $t_1=0$, with \(\mu_1 = o_1, P_1 = \sigma^2(z(t_1))\).  After each subsequent observation, the conditional distribution of \(K_1^\star\) remains Gaussian. Let
\((\mu_j,P_j)\) denote its posterior mean and variance after the
\(j\)th observation. For \(j = 2,
\ldots,n\), the recursive Bayesian updates are
\begin{align}
P_j
&=
\left(
P_{j-1}^{-1}
+
\sigma^{-2}(z(t_j))
\right)^{-1},
\label{eq:bayes_P}
\\
\mu_j
&=
\mu_{j-1}
+
\frac{P_{j-1}}
{P_{j-1}+\sigma^2(z(t_j))}
\left(
o_j-\mu_{j-1}
\right).
\label{eq:bayes_mu}
\end{align}

The posterior distribution summarizes the information obtained from
the observation history through its mean $\mu$ and variance $P$. We refer to $(\mu,P)$ as the belief state. Together with the physical state $z$, these variables define the
augmented state $x=(z,\mu,P)$. Between consecutive observations,
$(\mu,P)$ remains unchanged while $z$ evolves according to
\eqref{eq:dynamics}.

\subsection{Optimal Control Problem}

Let \(T_s\) denote the termination time, at which the corresponding terminal reward is attained. 
The objective is to maximize the expected terminal reward while
penalizing decision delay and control effort. The return associated with
an admissible control policy is
\begin{equation}
J
=
R_{\rm term}
-
\int_t^{T_s}
(d+cu^2(s))\,ds,
\label{eq:return}
\end{equation}
where \(d>0\) penalizes delay and \(c>0\) penalizes control effort.
The corresponding value function is
\begin{equation}
V(z,\mu,P,t)\!
=\!
\sup_{u(\cdot)}
\mathbb E
\!\left[
J\!
\mid\!
z(t)\!=z,\,\!
\mu(t)\!=\mu,\,\!
P(t)\!=P
\right].
\label{eq:value}
\end{equation}

At the boundaries of the admissible state domain, the controller may terminate and attain the
corresponding terminal reward. Conditional on the current posterior, the expected reward available upon termination are $K_2$ at $z_{\min}$ and $\mu$ at $z_{\max}$,
since \(\mathbb E[K_1^\star\mid\mu,P]=\mu\).
Because reaching or
starting at a boundary does not require termination, the controller
may instead continue inward. Accordingly, the boundary conditions are interpreted in the
viscosity sense,
\begin{equation}
V(z_{\min},\mu,P,t)\ge K_2,
\qquad
V(z_{\max},\mu,P,t)\ge \mu.
\label{eq:boundary_condition}
\end{equation}

Between observation times, the belief $(\mu,P)$ is fixed, and the value function
satisfies the Hamilton--Jacobi--Bellman equation
\begin{equation}
\!\frac{\partial V}{\partial t}\!
+\!
\max_{u\in\mathcal U}\!
\left\{\!
-d\!-\!cu^2\!
+\!
\frac{\partial V}{\partial z}\!f(z,u;\mu)\!
\right\}\!\!
=\!0,\ z\in\mathcal Z
\label{eq:HJB}
\end{equation}

After the final observation at $t_n$, no further belief updates occur and the
remaining control problem is time invariant. Since $P$ affects the
value only through future belief updates, the stationary
post-observation value depends only on $(z,\mu)$ and is denoted by
$V_{\rm tail}(z,\mu)$. For each fixed posterior mean $\mu$, it
satisfies
\begin{equation}
\!\!\max_{u\in\mathcal U}\!
\left\{\!
-\!d\!-\!cu^2\!
+\!
\frac{\partial V_{\rm tail}}{\partial z}
f(\!z,\!u;\!\mu\!)\!
\right\}\!
=\!0,
\ z\in\mathcal Z,\!
\label{eq:stationary_tail}
\end{equation}
with boundary conditions
\begin{equation}
    V_{\rm tail}(z_{\min},\mu)\ge K_2,
    \qquad
    V_{\rm tail}(z_{\max},\mu)\ge\mu,
    \label{eq:tail_boundary_condition}
\end{equation}
again interpreted in the viscosity sense. We use $t_j^-$ and $t_j^+$ to denote the instants immediately before and after the observation at $t_j$, respectively.
The stationary solution then provides the
terminal condition at \(t_n^+\) for backward recursion,
\begin{equation}
V(z,\mu_n,P_n,t_n^+)
=
V_{\rm tail}(z,\mu_n).
\label{eq:tail_terminal_condition}
\end{equation}

The value function is then constructed backward through the preceding observation intervals. At each observation time $t_j$,
$j=2,\ldots,n$, the physical state remains continuous while the belief
changes from $(\mu_{j-1},P_{j-1})$ at $t_j^-$ to $(\mu_j,P_j)$ at
$t_j^+$. Accordingly, the value function satisfies the jump condition
\begin{equation}
V\!(z,\!\mu_{j-1},\!P_{j-1},\!t_j^-)\!
=\!
\mathbb E\!\left[
V\!(\!z,\mu_j,\!P_j,\!t_j^+)\!\!
\mid\!
\mu_{j-1},\!P_{j-1}
\right].
\label{eq:jump}
\end{equation}

\subsection{Linear and Nonlinear Decision Dynamics}
To examine whether delayed decision-making depends on the particular form of the decision dynamics, we consider two models under the same reward structure, observation process, and running-cost formulation.

The first model is the linear integrator
\begin{equation}
    \dot z = u,
    \label{eq:linear_dynamics}
\end{equation} with the admissible state domain $ \mathcal Z_{\rm lin}=[z_{\min},z_{\max}] =[-\varepsilon,\varepsilon].$ At \(z=z_{\min}\) and \(z=z_{\max}\), the known and uncertain terminal
rewards, respectively, become available.

The second model is nonlinear opinion dynamics (NOD) model \cite{Bizyaeva23},
\begin{equation}
    \dot z\!
    =\!
    -z \!+\! h(u)\!
    + \tanh\!\left(
        u\left[\alpha(z-h(u))\!+\!b\right]
    \right),
    \ 
    h(u)=\beta u,
    \label{eq:nonlinear_dynamics}
\end{equation}
where $\alpha$ is a constant and
\begin{equation}
    b=\mu-K_2
    \label{eq:belief_bias}
\end{equation}
represents the current belief bias toward the uncertain alternative. The input $u$ is interpreted as attention and is always positive.  The NOD model captures a nonlinear symmetry-breaking mechanism
associated with a pitchfork bifurcation, while the bias \(b\) favors
one alternative over the other. The parameter \(\beta>0\) determines
the magnitude of the control-dependent shift in the nonlinear
dynamics and is chosen to be small.

For the nonlinear model, we consider the admissible state domain
\begin{equation}
    \mathcal Z_{\rm NOD}
    =
    [z_{\min},z_{\max}]
    =
    [-\varepsilon+\beta u_{\max},\,\varepsilon],
    \label{eq:nonlinear_domain}
\end{equation}
where $u_{\max}=\max_{u\in\mathcal U}|u|$. The asymmetry of this
domain accounts for the control-dependent shift $\beta u$ in the
nonlinear dynamics. The lower boundary is therefore shifted by the
maximum possible displacement $\beta u_{\max}$, while the underlying
decision thresholds remain symmetric. As in the linear model, the
known reward $K_2$ is available at $z=z_{\min}$ and the uncertain
reward $K_1^\star$ is available at $z=z_{\max}$. Reaching either
boundary does not require termination; the controller may instead
continue inward.

\subsection{Numerical Implementations}
The optimal policy is computed by backward dynamic programming on
discretized state, belief, and control spaces. We first compute the stationary tail value \(V_{\rm tail}\) from \eqref{eq:stationary_tail}, with the boundary condition \eqref{eq:tail_boundary_condition}.  At each boundary, termination is
compared with admissible inward continuation. The resulting
stationary solution provides the terminal condition at \(t_n^+\) for
the backward recursion.

The value function is then propagated backward between consecutive
observation times using a semi-Lagrangian discretization of
\eqref{eq:HJB} \cite{Falcone13}. If the transition would leave the admissible state domain,
the trajectory is terminated at the corresponding boundary, and the
running cost is accumulated only until the boundary is reached.

At each observation time, the expectation in \eqref{eq:jump} is
evaluated as a Gaussian convolution along the posterior-mean
coordinate. Conditioned on the pre-observation belief, the posterior
mean satisfies
\begin{equation}
    \mu_j
    \mid
    \mu_{j-1},P_{j-1}
    \sim
    \mathcal N\!\left(
        \mu_{j-1},P_{j-1}-P_j
    \right),
    \label{eq:posterior_mean_transition}
\end{equation}
where $P_j$ is given by \eqref{eq:bayes_P}. Hence, the jump condition
is evaluated as
\begin{equation}
\begin{aligned}
&V(z,\mu_{j-1},P_{j-1},t_j^-)
\\[-1mm]
&\quad =
\int_{\mathbb R}
V(z,\mu',P_j,t_j^+)
\,\mathcal N\!\left(
\mu';\mu_{j-1},P_{j-1}-P_j
\right)d\mu'.
\end{aligned}
\label{eq:jump_convolution}
\end{equation}
The integral is evaluated numerically on the discretized belief space for each state \(z\), after which the value function is propagated backward to the preceding observation time.

\section{Truncating the observation horizon}
\label{s:horizon}

The formulation in \S\ref{s:problem} considers a prescribed sequence of observation times $t_1,\ldots,t_n$. In general, whether further delay is justified must be determined dynamically based on the information
accumulated up to the current observation.
We now derive sufficient conditions under which one can determine a priori that observations
beyond a certain point cannot justify their additional waiting cost. Such observations can then be removed without changing the optimal value, allowing an optimal policy to be obtained from the truncated
problem.

For this analysis, we assume that observations 
have constant observation variance \(\sigma^2\)
and are acquired every $\tau>0$ seconds,
$t_j=(j-1)\tau.$
We further assume that the physical dynamics are independent of the belief 
\begin{equation}
    \dot z=f(z,u).
    \label{eq:mu_independent_dynamics}
\end{equation}
Under the Bayesian update of the reward estimate in \S\ref{s:problem}, after the  $j$th observation, the posterior satisfies
\begin{equation}
    K_1^\star\mid(\mu_j,P_j)
    \sim\mathcal N(\mu_j,P_j),
    \qquad
    P_j=\frac{\sigma^2}{j}.
    \label{eq:posterior_j}
\end{equation}
Consequently,
\begin{equation}
    \mu_{j+1}\mid\mu_j
    \sim
    \mathcal N\!\left(
        \mu_j,\frac{\sigma^2}{j(j+1)}
    \right).
    \label{eq:belief_transition}
\end{equation}

The benefit of an additional observation decreases as information
accumulates. To quantify this benefit, define
\begin{equation}
    \Delta_j(\mu_j)
    =
    \mathbb E\!\left[
        \max(\mu_{j+1},K_2)
        \mid\mu_j
    \right]
    -
    \max(\mu_j,K_2).
    \label{eq:delta_j}
\end{equation}
Thus, $\Delta_j(\mu_j)$ is the expected improvement in the larger
terminal reward resulting from the $(j+1)$st observation, relative to
selecting between the two alternatives using the current belief.
Using \eqref{eq:belief_transition}, its maximum over the current
posterior mean is
\begin{equation}
    \Delta_j^{\max}
    :=
    \max_{\mu_j}\Delta_j(\mu_j)
    =
    \frac{\sigma}{\sqrt{2\pi j(j+1)}},
    \label{eq:delta_max}
\end{equation}
where the maximum is attained at $\mu_j=K_2$. Hence
$\Delta_j^{\max}$ decreases monotonically with $j$.

The quantity $\Delta_j$ measures the information-induced improvement
in terminal reward. To relate this 
to the control problem,
consider a relaxation in which, immediately after the $(j+1)$st
observation, the controller may receive the larger posterior expected
terminal reward without incurring any additional cost to reach the
corresponding boundary. Thus, $\Delta_j$ provides an upper bound on
the increase in expected overall value
attributable to one additional observation.

We now compare this information benefit with the value attainable without using future observations. 
Under such \emph{immediate-commitment policy}, controller selects the alternative with the larger current expected terminal reward $\max(\mu,K_2)$ and then moves to it
while ignoring further observations even if they occur before the corresponding boundary  is reached.
Deciding to obtain another observation, in contrast, requires remaining uncommitted until $t_{j+1}$ and therefore incurs at least the waiting cost $d\tau$ 
regardless of additional costs of changing the state.

Two restrictions allow these quantities to be compared uniformly. First, the belief-independent dynamics in \eqref{eq:mu_independent_dynamics} allow the benefit of an additional observation to be bounded through its effect on the reward estimate, independently of the state evolution. Second, we require a uniform bound on the cost of attaining the alternative selected under
immediate commitment.

\begin{assumption}[Bounded reward-attainment cost]
\label{assump:bounded_commit}
For every admissible nonterminal state $z$ and belief $\mu$, there
exists an immediate-commitment policy that reaches the boundary
associated with $\max(\mu,K_2)$ and incurs accumulated running cost no
greater than some $\bar C<\infty$ before termination.
\end{assumption}

By Assumption~\ref{assump:bounded_commit}, there exists a feasible
policy in the tail problem that attains expected terminal reward $\max(\mu,K_2)$
with running cost at most $\bar C$. Since
$V_{\rm tail}$ is the optimal value over all policies that use no
further observations, it cannot be smaller than the value achieved by
this policy.  Meanwhile, because no further information is acquired in the tail problem, the expected terminal reward is at most $\max(\mu,K_2)$ and the nonnegative running cost can only decrease the value.
Therefore,
\begin{equation}
    \max(\mu_j,K_2)-\bar C
    \le
    V_{\rm tail}(z,\mu_j)
    \le
    \max(\mu_j,K_2).
    \label{eq:tail_bounds}
\end{equation}

We derive the truncation condition by comparing the values of waiting for another observation and immediate commitment. Recall that the relaxation allows the controller, after the next observation, to receive the larger posterior expected terminal reward without incurring any additional reward-attainment cost.  Thus, the value of waiting until $t_{j+1}$ is bounded above by
\begin{equation}
    \mathbb E\!\left[
        \max(\mu_{j+1},K_2)\mid\mu_j
    \right]
    -d\tau,
\end{equation}
whereas Assumption~\ref{assump:bounded_commit} guarantees an immediate-commitment policy with the value of at least $ \max(\mu_j,K_2)-\bar C$.
Therefore, waiting for the next observation cannot improve the value if
\begin{equation}
    \mathbb E\!\left[
        \max(\mu_{j+1},K_2)\mid\mu_j
    \right]
    -d\tau
    \le
    \max(\mu_j,K_2)-\bar C.
\end{equation}
Using \eqref{eq:delta_j}, this condition is equivalent to $ \Delta_j(\mu_j) \le d\tau-\bar C$.
Since $\Delta_j(\mu_j)\le\Delta_j^{\max}$, a sufficient condition
uniformly over the current posterior mean is
\begin{equation}
    \Delta_j^{\max}
    \le
    d\tau-\bar C.
    \label{eq:truncation_condition}
\end{equation}
Because $\Delta_j^{\max}$ decreases monotonically with $j$, once
\eqref{eq:truncation_condition} holds, it also holds for every
subsequent observation.

To compare problems with different truncation points, let
\begin{equation}
    V^{(k)}(z,\mu,P,t),
    \qquad k=1,\ldots,n,
    \label{eq:horizon_value}
\end{equation}
denote the value function when $t_k$ is the final available observation time. Thus $V^{(n)}$ is the value function of the original problem with all $n$ prescribed  observations, whereas $V^{(k)}$, for $k<n$, is the value function of the problem truncated
after the $k$th observation.

\begin{theorem}[Finite observation horizon]
\label{thm1}
Suppose Assumption~\ref{assump:bounded_commit} holds and
$d\tau>\bar C$. 
Define
\begin{equation}
    N
=
\left\lceil
\frac{1}{2}\left(-1+\sqrt{
1+
\frac{2\sigma^2}{\pi(d\tau-\overline{C})^2}
}\right)
\right\rceil
\label{eq:nstar}
\end{equation}

If $N<n$, then, for every $j=N,\ldots,n$,
\begin{equation}
    V^{(n)}(z,\mu_j,P_j,t_j^+)
    =
    V_{\rm tail}(z,\mu_j)
    \label{eq:backward_claim}
\end{equation}
for every admissible $z$ and posterior mean $\mu_j$, with $P_j=\sigma^2/j$. In particular, the
observations $t_{N+1},\ldots,t_n$ can be disregarded without changing the
optimal value at $t_N^+$:
\begin{equation}
    V^{(n)}(z,\mu_N,P_N,t_N^+)
    =
    V^{(N)}(z,\mu_N,P_N,t_N^+).
    \label{eq:truncation_result}
\end{equation}
\end{theorem}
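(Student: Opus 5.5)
The plan is a backward induction on $j$, from $j=n$ down to $j=N$, using the one-step comparison already derived before the theorem: waiting until the next observation is worth at most $\max(\mu_j,K_2)+\Delta_j(\mu_j)-d\tau$, while committing immediately is worth at least $\max(\mu_j,K_2)-\bar C$.

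The first step converts the definition of $N$ into the condition \eqref{eq:truncation_condition}. Write $a=d\tau-\bar C>0$. By \eqref{eq:delta_max}, $\Delta_j^{\max}\le a$ is equivalent to $j(j+1)\ge \sigma^2/(2\pi a^2)$. The positive root of $j^2+j-\sigma^2/(2\pi a^2)=0$ is exactly the expression inside the ceiling in \eqref{eq:nstar}. Hence $j\ge N$ implies \eqref{eq:truncation_condition}, and by monotonicity of $\Delta_j^{\max}$ the condition persists for all larger $j$. This fixes the range $j=N,\dots,n-1$ where the comparison will be applied.

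The base case $j=n$ is \eqref{eq:tail_terminal_condition}. For the inductive step, assume $V^{(n)}(z,\mu_{j+1},P_{j+1},t_{j+1}^+)=V_{\rm tail}(z,\mu_{j+1})$ for all $z,\mu_{j+1}$. On the interval $(t_j,t_{j+1}]$, any policy from $(z,\mu_j)$ at $t_j^+$ falls into one of two cases.
\begin{enumerate}
\item It terminates before $t_{j+1}$. Then it uses no new information and is admissible for the tail problem, so its value is at most $V_{\rm tail}(z,\mu_j)$.
\item It survives to $t_{j+1}$. Then it pays at least $d\tau$ in running cost, since $c u^2\ge0$. By the induction hypothesis and the jump condition \eqref{eq:jump}, its continuation value is at most $\mathbb E[V_{\rm tail}(z(t_{j+1}),\mu_{j+1})\mid\mu_j]$. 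Using the upper bound in \eqref{eq:tail_bounds}, this is at most $\max(\mu_j,K_2)+\Delta_j(\mu_j)-d\tau$.
\end{enumerate}
By \eqref{eq:truncation_condition}, this bound is at most $\max(\mu_j,K_2)-\bar C\le V_{\rm tail}(z,\mu_j)$, where the last inequality is the lower bound in \eqref{eq:tail_bounds}. So every policy has value at most $V_{\rm tail}(z,\mu_j)$, giving $V^{(n)}\le V_{\rm tail}$ at $t_j^+$.

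For the reverse inequality, note that any tail policy remains admissible in the full problem once later observations are ignored. Because the dynamics \eqref{eq:mu_independent_dynamics} do not depend on $\mu$, ignoring the belief updates does not change the trajectory. This gives $V^{(n)}\ge V_{\rm tail}$ and closes the induction. Finally, \eqref{eq:truncation_result} follows because $V^{(N)}(\cdot,t_N^+)=V_{\rm tail}$ by the definition of the truncated problem.

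The main obstacle is the second case of the inductive step. A policy that survives to $t_{j+1}$ may combine partial motion before $t_{j+1}$ with mixed strategies. The delicate point is that the bound must be applied to the entire policy, with the running cost on $[t_j,t_{j+1}]$ being at least $d\tau$, rather than through a heuristic "relaxation". The plan handles this by conditioning on the survival event and bounding each branch separately, as above. It also uses the fact that $V_{\rm tail}$ is evaluated at a possibly different state $z(t_{j+1})$, which is harmless because the upper bound in \eqref{eq:tail_bounds} is uniform in $z$.
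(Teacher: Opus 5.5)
Your proposal is correct and follows essentially the same route as the paper. Both arguments proceed by backward induction from $t_n^+$. A policy that waits for $t_{j+1}$ is bounded by $\max(\mu_j,K_2)+\Delta_j^{\max}-d\tau$, using the induction hypothesis and the uniform upper bound $V_{\rm tail}\le\max(\mu,K_2)$. That bound is then dominated by $\max(\mu_j,K_2)-\bar C\le V_{\rm tail}(z,\mu_j)$.

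You also make explicit two steps the paper only asserts: that the ceiling defining $N$ is the positive root of $j(j+1)=\sigma^2/(2\pi(d\tau-\bar C)^2)$, and that $V^{(n)}\ge V_{\rm tail}$ holds because the dynamics do not depend on the belief.
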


\begin{proof}
At the final observation time $t_n^+$, no future observations remain, so
by \eqref{eq:tail_terminal_condition},
\begin{equation}
    V^{(n)}(z,\mu_n,P_n,t_n^+)
    =
    V_{\rm tail}(z,\mu_n),
    \label{eq:base_case}
\end{equation}
so \eqref{eq:backward_claim} holds for $j=n$.

Starting from the established base case at $t_n^+$, we propagate the
equality backward one observation interval at a time. Specifically,
if
\begin{equation}
    V^{(n)}(z,\mu_{j+1},P_{j+1},t_{j+1}^+)
    =
    V_{\rm tail}(z,\mu_{j+1})
    \label{eq:induction_hypothesis}
\end{equation}
holds at $t_{j+1}^+$ for every admissible $z$ and posterior mean
$\mu_{j+1}$, then we show below that the same equality must hold at
$t_j^+$.
 
Starting from $(z,\mu_j,P_j)$ at $t_j^+$, the controller may proceed without using further observations to make subsequent decisions or remain active until $t_{j+1}$ and obtain the next observation. The optimal value in the absence of further observations is
$V_{\rm tail}(z,\mu_j)$. This does not require termination before $t_{j+1}$; the controller may continue to evolve the state, but its subsequent decisions do not depend on additional observations.

Let $W_j(z,\mu_j,P_j)$ denote the optimal value among policies that use the observation at $t_{j+1}$. Any such
policy must remain active until $t_{j+1}$ and therefore incurs the
delay cost $d\tau$. Neglecting the nonnegative control-effort cost over
$[t_j,t_{j+1}]$ and applying induction gives
\begin{align}
    W_j(z,\mu_j,P_j)
    &\le
    -d\tau+
    \mathbb E[
        V_{\rm tail}(z_{j+1},\mu_{j+1})
        \mid z,\mu_j, P_j]
    \nonumber\\
    &\le
    -d\tau+
    \mathbb E[
        \max(\mu_{j+1},K_2)\mid\mu_j, P_j]
    \nonumber\\
    &\le
    \max(\mu_j,K_2)
    +\Delta_j^{\max}-d\tau,
    \label{eq:wait_bound}
\end{align}
where the second inequality follows from \eqref{eq:tail_bounds}, and the last follows from  \eqref{eq:delta_j}--\eqref{eq:delta_max}.

By \eqref{eq:nstar} and the monotonic decrease of
$\Delta_j^{\max}$,
\begin{equation}
    \Delta_j^{\max}
    \le
    \Delta_{N}^{\max}
    \le
    d\tau-\bar C,
    \qquad j\ge N.
    \label{eq:cutoff_condition}
\end{equation}
Thus, from \eqref{eq:wait_bound} and \eqref{eq:tail_bounds},
\begin{equation}
    W_j(z,\mu_j,P_j)
    \le
    \max(\mu_j,K_2)-\bar C
    \le
    V_{\rm tail}(z,\mu_j).
    \label{eq:wait_dominated}
\end{equation}
Therefore, for $j\ge N$, using the next observation cannot yield a
higher value than proceeding without further observations. Since
$V_{\rm tail}(z,\mu_j)$ is the optimal value when no further
observations are used, the optimal value of the original problem is
\begin{equation}
    V^{(n)}(z,\mu_j,P_j,t_j^+)
    =
    V_{\rm tail}(z,\mu_j).
    \label{eq:backward_step}
\end{equation}
Applying this argument successively from $j=n-1$ down to $j=N$
proves \eqref{eq:backward_claim}.

Finally, setting $j=N$ gives
\begin{equation}
    V^{(n)}(z,\mu_N,P_N,t_N^+)
    =
    V_{\rm tail}(z,\mu_N).
\end{equation}
In the problem truncated after $t_N$, the value at $t_N^+$ is also
given by the tail value, by \eqref{eq:tail_terminal_condition}
\begin{equation}
    V^{(N)}(z,\mu_N,P_N,t_N^+)
    =
    V_{\rm tail}(z,\mu_N).
\end{equation}
Hence,
\begin{equation}
    V^{(n)}(z,\mu_N,P_N,t_N^+)
    =
    V^{(N)}(z,\mu_N,P_N,t_N^+),
\end{equation}
which proves \eqref{eq:truncation_result}.
\end{proof}

Under these additional assumptions, Theorem~\ref{thm1} rules out indefinite information-driven indecision. Although uncertainty can make it optimal to delay reward attainment,
the benefits of further information eventually become too small to justify the cost of waiting.  This provides a finite horizon beyond which future observations need not to be considered when computing the optimal policy.

As a simple example, consider dynamics \eqref{eq:linear_dynamics} with $c = 0.08$, $d = 0.05$, $\varepsilon = 0.1$, $u_{\max}  = 1$, $\tau = 1.0$, and $\sigma = 0.5$. For these parameters, 
a simple upper bound on the
reward-attainment cost is
 \(\bar C=4\varepsilon\sqrt{cd}=0.0253<d\tau=0.05,\) so the conditions of Theorem~\ref{thm1} are satisfied. The resulting 
 cutoff is $N = 8$. Thus, all observations after the eighth can be disregarded without changing the optimal value.

\section{Numerical Experiments}
\label{s:results}
The code used to generate the numerical results in this section, along
with additional examples, is available at \url{https://github.com/ChuweiW/OC_NOD}
\subsection{Delayed Commitment Across Different Dynamics}
\label{sec_IV_A}

We first examine whether delayed reward attainment persists across distinct
decision dynamics. We consider the linear \eqref{eq:linear_dynamics} and nonlinear  \eqref{eq:nonlinear_dynamics} models under a
constant-variance observation model, $\sigma^2(z)=\sigma^2$, while allowing
the remaining model and numerical parameters to be selected separately
for each system. For both models, we consider a finite sequence of three observations. Figure~\ref{fig:trajectory_dynamics} shows representative
optimal trajectories for both models. At each of the three observation times, the
posterior variance $P_j=\frac{\sigma^2}{j}$ is common to all branches, while the labeled
posterior means $\mu_j$ represent different realizations of the belief
update caused by different observation values.

\begin{figure}[htbp]
    \centering
    \includegraphics[width=1\linewidth]{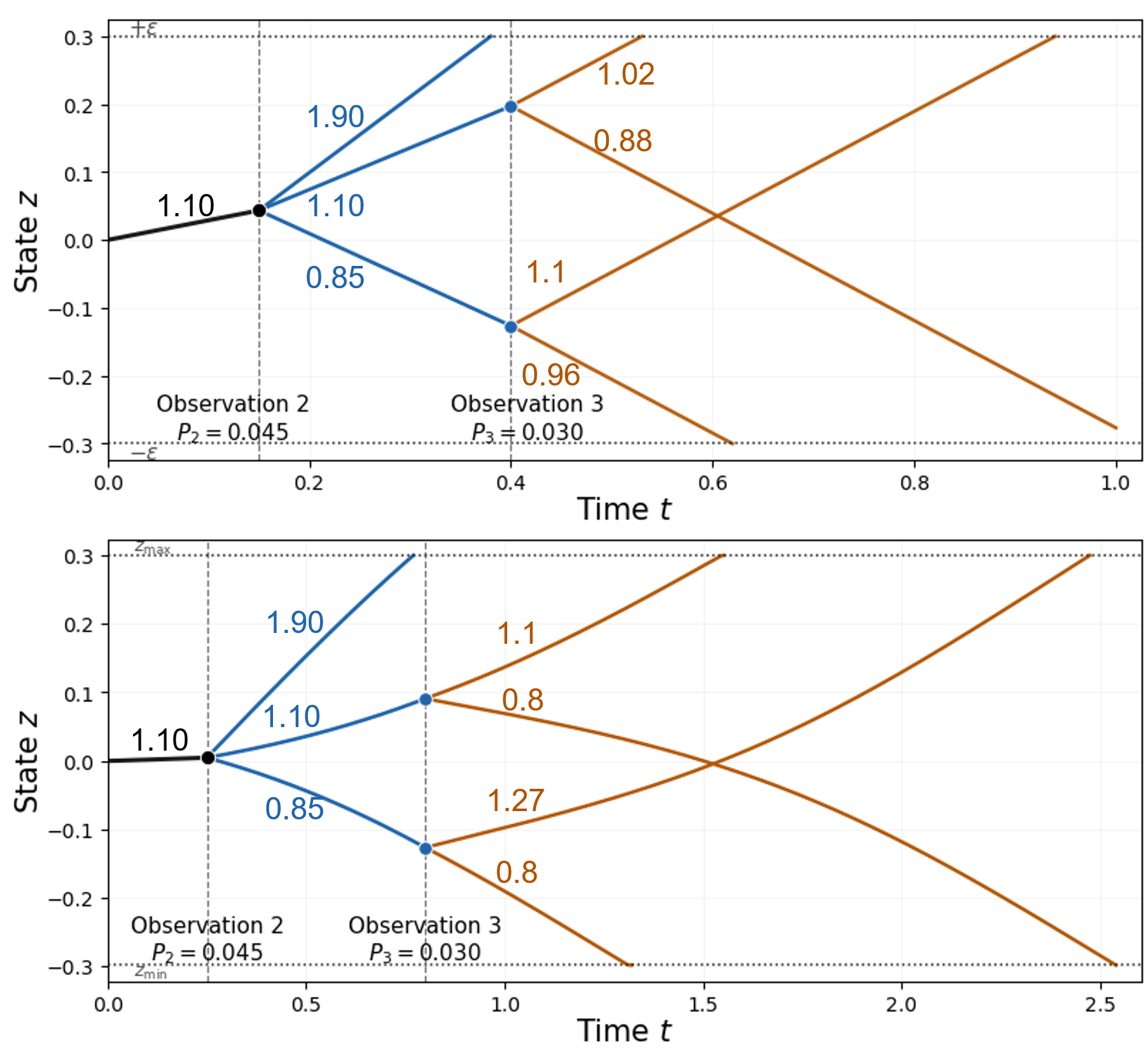}
    \caption{\small Representative optimal trajectories under the linear (top) and
    nonlinear (bottom) dynamics, illustrating how observation-induced posterior updates affect the control response. Black curves show the shared trajectory after the first observation; blue and brown trajectories
    show branches following the second and third observation. Numerical
    labels indicate the corresponding posterior means. Dashed vertical lines
    mark observation times $t_1=0$, $t_2=0.15$, and $t_3=0.4$ for linear model and $t_1=0$, $t_2=0.25$, and $t_3=0.8$ for the nonlinear model. Dotted horizontal lines denote the boundaries of state domain. Both panels use $z_0=0$, $K_2=1$, $d=0.05$,
    and $\sigma=0.3$.  For the linear model, $c=0.08$, $u\in[-2,2]$ and $z\in[-0.3,0.3]$. For the nonlinear model, $c=0.06$, $\alpha=1.5$, $\beta=0.001$,
    $u\in[0,2]$ and $z\in[-0.298,0.3]$.}
    \label{fig:trajectory_dynamics}
\end{figure}

Following the second observation, the trajectories illustrate how the
optimal response depends on both the posterior mean and the confidence
in that estimate. When the posterior mean strongly favors the upper
alternative, as for $\mu_2=1.90$ in both dynamics, the controller commits to the upper
alternative and terminates before the third observation. When the
posterior favors the lower alternative, as for $\mu_2=0.85$, the
controller instead begins moving toward the lower boundary while
postponing decisions until further information is received. The center
branch provides a particularly clear illustration of the role of
confidence: the posterior mean remains unchanged at $\mu=1.10$ after
the second observation, while the posterior variance decreases. Despite
the unchanged mean belief, the optimal control changes as confidence
in that belief increases. Thus, the policy depends on the full belief
state $(\mu,P)$, rather than on the posterior mean alone. At the
same time, the controller remains uncommitted despite having sufficient
time to attain that reward before the next observation, showing that this delay is an
optimal response to uncertainty rather than a consequence of limited
reachability.
Following the third observation, the problem reduces to the stationary
tail problem, and the controller proceeds toward the terminal
favored by the updated belief. The branching nevertheless illustrates how delayed reward attainment can arise from remaining uncommitted, which preserves the ability to redirect the motion when new information arrives. 

Although the trajectories differ in shape under the linear and nonlinear dynamics, both exhibit the same qualitative behavior:
uncertainty can make it optimal to delay selecting an alternative, whereas sufficiently informative observations can lead to earlier decision
and reward attainment. We next quantify this behavior in a simpler setting with at most two observations. As a baseline, we compute a
one-observation policy, under which the controller receives only the
initial observation and subsequently acts with a fixed belief. We compare
this baseline with a two-observation policy, which anticipates an
additional future observation and can therefore delay commitment to make
use of the information it provides. Each policy is optimized separately
under its corresponding observation schedule.

For the results in Fig.~\ref{fig:intro}, we set
$K_2=1$, $\sigma=0.25$, $c=0.1$, $u_{\max}=2$, and
$\varepsilon=0.3$, and consider waiting costs
$d\in\{0.001,0.01,0.05\}$. For the linear model, we use
$\mathcal{U}=[-u_{\max},u_{\max}]$ and
$\mathcal{Z}_{\rm lin}=[-\varepsilon,\varepsilon]$. For the nonlinear
model, we use $\mathcal{U}=[0,u_{\max}]$,
$\mathcal{Z}_{\rm NOD}=[-\varepsilon+\beta u_{\max},\varepsilon]$,
$\alpha=1.5$, and $\beta=0.001$. Thus, with the parameters above,
the state domains are $[-0.3,0.3]$ and $[-0.298,0.3]$ for the linear
and nonlinear models, respectively. For each waiting cost and dynamical
model, we first determine $t_{\mathrm{ref}}$ from the corresponding
one-observation baseline and then set the second observation time to
$t_2=t_{\mathrm{ref}}$, as described below.

The first measure reported in Fig.~\ref{fig:intro} is the probability of delayed reward attainment. For each waiting cost and dynamical model, we define a reference time $t_{\mathrm{ref}}$ from the reward-attainment times of the corresponding one-observation policy. Reward-attainment time is the time at which a trajectory reaches the boundary associated with the attained reward.
The reference times are computed by pooling across sampled initial states, reward differences, and initial
observations. For the linear
model, \(t_{\mathrm{ref}}\) is the largest sampled reward-attainment time.
For the nonlinear model, where reward-attainment times exhibit greater variability, \(t_{\mathrm{ref}}\) is the earliest time by
which at least \(90\%\) of the sampled one-observation trajectories have
attained a reward. The resulting reference times are listed in
Table~\ref{tab:d_tobs}. At each reward difference, the probability of
delayed reward attainment is estimated as the fraction of trajectories under the two-observation policy that have not attained a reward by \(t_{\mathrm{ref}}\).  Thus, this measure quantifies how often the
availability of an additional observation extends reward attainment beyond the characteristic time scale of the one-observation baseline.

The second measure quantifies the additional delay in reward attainment under the two-observation policy.
Trajectories under the one- and two-observation policies are paired using the same initial state $z_0$ and initial observation, and
we define the additional delay as
\begin{equation}
    \Delta T = T_{\mathrm{two}}-T_{\mathrm{one}},
    \label{eq:additional_delay}
\end{equation}
where \(T_{\mathrm{one}}\) and \(T_{\mathrm{two}}\) denote the corresponding
reward-attainment times. Figures~\ref{fig:intro}(b) and
\ref{fig:intro}(d) report the mean $\Delta T$, with the shaded regions
indicating  $95\%$ confidence intervals for the mean. Thus, $\Delta T>0$ indicates that the availability of future information delays reward attainment relative to the one-observation baseline.

To express the reward differences relative to observation uncertainty, the horizontal axis in Fig.~\ref{fig:intro} shows the standardized value difference
\[
    \frac{K_1^\star-K_2}{\sigma},
    \label{eq:standardized_value_difference}
\]
where \(K_1^\star\) is the true value of the uncertain reward,
\(K_2=1\) is the known reward, and \(\sigma=0.25\) is the observation
standard deviation.
Values near zero correspond to alternatives with similar rewards
relative to the observation noise, whereas increasingly positive or
negative values correspond to larger reward differences in favor of the
uncertain or known alternative, respectively.

\begin{table}[t]
\centering
\caption{\small Second observation times $t_2 = t_{\mathrm{ref}}$ for each waiting cost $d$ and dynamics.}
\label{tab:d_tobs}
\begin{tabular}{c c c}
\hline
Waiting cost $d$ & Linear $t_{\mathrm{ref}}$ & Nonlinear $t_{\mathrm{ref}}$ \\
\hline
0.001 (low waiting cost) & 6.005 & 6.670 \\
0.010 (medium waiting cost) & 2.005 & 4.150 \\
0.050 (high waiting cost)& 0.865 & 2.475 \\
\hline
\end{tabular}
\end{table}

\subsection{Effect of State-Dependent Observation Quality}
\begin{figure*}[ht]
    \centering
    \includegraphics[width=\textwidth]{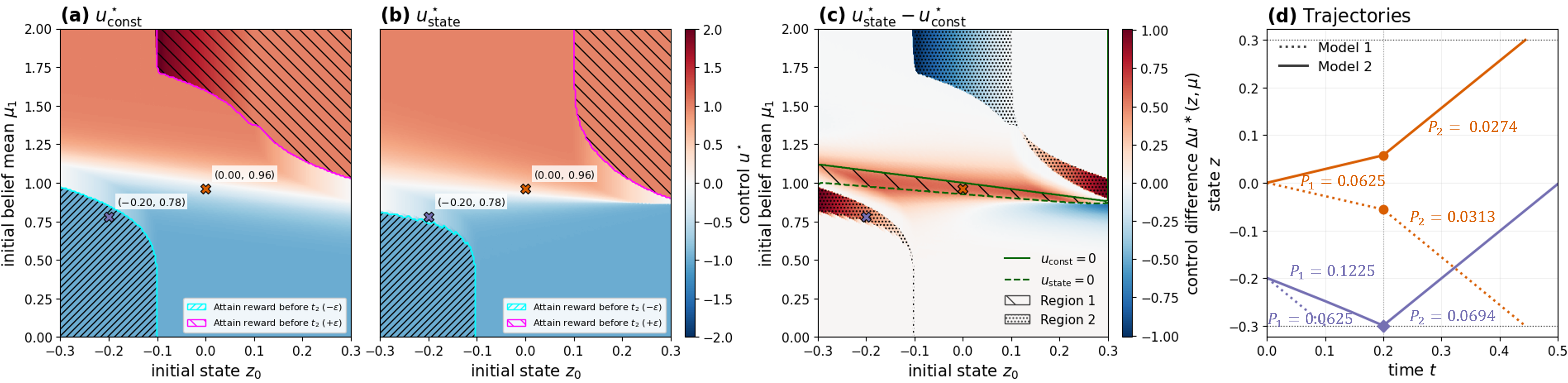}
    \caption{\small Effect of state-dependent observation quality on the
    optimal policy. (a) Initial optimal control $u^\star_{\rm const}$ for Model~1 (constant
    observation variance). (b) Initial optimal control $u^\star_{\rm state}$ for Model~2 (state-dependent observation variance). Cyan and magenta striped regions indicate initial conditions for which the system terminates at $-\varepsilon$ and $+\varepsilon$, respectively, and attains the corresponding terminal reward before $t_2=0.2$, regardless of the second observation.
    (c) Difference
    $u^\star_{\rm state}-u^\star_{\rm const}$; solid and dashed green
    curves denote $u^\star_{\rm const}=0$ and
    $u^\star_{\rm state}=0$, respectively. Regions~1 and~2 identify
    changes in the initial control direction and in reward attainment
    before $t_2$, respectively. (d) Representative trajectories for
    the initial conditions marked by crosses in (a)--(c).
    Dotted and solid curves correspond to 
    constant-variance (Model~1) and  state-dependent (Model~2) sensing, respectively; same-colored trajectories
    use identical observation realizations. For the orange trajectories,
    the second observation is $o_2=1.06$, yielding
    $\mu_2=1.010$ and $1.016$ under Model~1 and~2, respectively. For the purple Model~2 trajectory,
    $o_2=1.80$ yields $\mu_2=1.222$.
    Parameters: $c=0.1$, $d=0.1$, $\varepsilon=0.3$, $u_{\max}=2$,
    $\sigma_0=0.25$, $\sigma_{\min}=0.1$, and
    $\sigma_{\max}=0.4$.}
    \label{fig:obsmodel_compare}
\end{figure*}

Thus far, our experiments have assumed constant observation quality with information gained by combining multiple observations over time.  In many 
spatial decision problems, however, sensing quality
depends on the decision maker's location.  For example, an animal approaching a potential food source or a person moving toward an object of interest may obtain more reliable information as the distance to the object decreases. This motivates a state-dependent observation model in which motion toward an uncertain alternative can improve the quality of subsequent observations. 
As we show below, this dependence can induce strategic optimal motion toward  an option not because it is currently preferred, but because approaching it provides information that can help determine whether it should be selected.

To isolate this effect, 
we compare the optimal behavior in two systems with identical linear dynamics \eqref{eq:linear_dynamics}, reward structures, cost parameters, and observation times. The only difference is in the observation variance. We consider only two observations at $t_1 = 0$ and $t_2 = 0.2$. In \emph{Model~1}, observations have constant standard deviation,
\begin{equation}
    \sigma(z)\equiv\sigma_0.
\end{equation}
In \emph{Model~2}, the observation standard deviation depends on the state according to
\begin{equation}
    \sigma(z)
    =
    \sigma_{\min}
    +
    (\sigma_{\max}-\sigma_{\min})
    \left(
        \frac{z_{\max}-z}{z_{\max}-z_{\min}}
    \right),
    \qquad
    z\in\mathcal Z_{\rm lin},
    \label{eq:state_variance_result}
\end{equation}
where \(0<\sigma_{\min}\leq\sigma_{\max}\) and
$\sigma(\frac{z_{\max}+z_{\min}}{2}) = \sigma_0$.  Thus the two models have the same observation quality at the midpoint, whereas in Model~2 the observation standard deviation decreases linearly as the state moves toward $z_{\max}$ , where the uncertain reward $K_1^\star$ is available.

Figures~\ref{fig:obsmodel_compare}(a) and
\ref{fig:obsmodel_compare}(b) show the initial optimal control $ u^*(z,\mu)$ under
Model~1 and Model~2, respectively.  The cyan and magenta hatched regions contain initial conditions for which the system terminates and attains the reward at $z_{\min}$ and
$z_{\max}$, respectively, before the second observation at $t_2=0.2$. Comparing the two panels shows that state-dependent observation quality
can affect both the direction and magnitude of the initial control and, consequently, whether reward is attained before the next observation.
These effects are illustrated in Fig.~\ref{fig:obsmodel_compare}(c).

To compare the two policies directly, we define
\begin{equation}
    \Delta u^*(z,\mu)
    =
    u^*_{\mathrm{state}}(z,\mu)
    -
    u^*_{\mathrm{const}}(z,\mu).
    \label{eq:policy_difference}
\end{equation}

Figure~\ref{fig:obsmodel_compare} (c) shows
\(\Delta u^*(z,\mu)\) at the initial time together with the zero-control curves for the two models.
Striped ``Region~1'' identifies initial conditions starting from which the two policies move in opposite directions. 
Dotted ``Region~2'' identifies a different effect: the two policies
initially move in the same direction, but at different rates, with the slower one representing a deliberate information gathering delay.
In this region, one policy attains a terminal reward before $t_2$, while the other remains in the interior and acquires the second observation.

The representative trajectories in Fig.~\ref{fig:obsmodel_compare}
(d) illustrate these two effects. Each
same-colored pair starts from the same initial condition and receives
the exact same observation realizations.
Nevertheless, the policies differ because
the posterior variances  differ  and depend on the state $z$
in  Model~2.
In particular, moving toward $z_{\max}$ reduces $\sigma(z)$ and therefore provides a
lower-variance, more informative measurement of $K_1^\star$.

For the representative initial condition in Region~1, shown by the
orange trajectories in Fig.~\ref{fig:obsmodel_compare}(d),
the initial measurement yields $\mu_1 < K_2=1$, so the current belief favors the known reward $K_2$. In Model~1, the controller accordingly begins moving toward $z_{\min}$, as shown by the dotted orange trajectory. 
Because the reward differential is small relative to $\sigma_0,$ however, the controller moves gradually while retaining the ability to respond to the second observation.  In Model~2 (trajectory shown in solid orange), the controller instead moves toward  $z_{\max}$, despite the current belief favoring $K_2$, because doing so improves the quality of the next observation of $K_1^\star$.  
The potential information gain is sufficient to justify the motion away from the currently favored alternative, even though doing so can increase the subsequent cost of attaining $K_2$ if the next observation continues to favor it. 
Interestingly, in this example the second observation does reveal $K_1^\star$ as a slightly better option.  Under Model~1, however, this small reward advantage is insufficient to offset the additional cost of reversing direction from the state already reached, and the controller continues toward $z_{\min}$.

For the representative initial condition in Region~2, shown by the
purple trajectories in Fig.~\ref{fig:obsmodel_compare}(d), both policies initially move toward $z_{\min}$, but the optimal behavior of Model~1 is far more aggressive.
At the initial state $z_0 = -0.2$, $\sigma(z_0)> \sigma_0$, so Model~2 has higher initial uncertainty than Model~1 and hedges its bets, approaching $z_{\min}$ just in time for the second observation and ready to change course if $K_1^\star$ turns out to be significantly better.

\section{Conclusion}
\label{s:conclusions}
Choosing between uncertain options 
is hard even for a fully rational decision maker.  
Motivated by embodied decision making in biology and robotics, we focused on a ``simple'' choice between
two rewards, one of which has an uncertain value that has to be estimated through a sequence of noisy observations.  
Unlike in the standard sequential sampling
setting (e.g., \cite{Chick12}), our rewards cannot be claimed instantaneously
and have to be attained by sufficiently changing the decision maker's internal state, which can also directly affect the quality of observations.  Balancing the costs of controlled state evolution, information acquisition, and the risk of selecting a suboptimal reward results in a nuanced optimization problem exhibiting several types of strategic behavior.  Using dynamic programming, we demonstrated that it is often {\em optimal} to delay the reward attainment, particularly when the respective values of two rewards are initially hard to distinguish and the cost of waiting for additional observations is relatively low.  We showed that this is generally true for very different types of controlled dynamics and observation models.  
For a subset of problems in this class, we have also derived 
an a priori upper bound
on a number of useful observations.
When observation quality is state-dependent, we showed that
it can be optimal to approach an uncertain reward purely for information-gathering reasons, even if its value is currently viewed as somewhat inferior.  
Our results provide a scientific insight on why delays in decision making under uncertainty may have evolved as ubiquitous features of natural behavior.  They also suggest an important design principle for autonomous decision making across domains: when information is uncertain and stakes are high, it pays 
to hedge your bets.  
Future work will extend the presented framework to multiple uncertain rewards, higher-dimensional state spaces, 
stochastic controlled dynamics, and multiple (cooperating or competing) decision makers.

\bibliographystyle{ieeetr}
\bibliography{references}
\end{document}